\newtheorem{theorem}{Theorem}[section]
\newtheorem{corollary}[theorem]{Corollary}
\newtheorem{remark}[theorem]{Remark}
\newtheorem{lemma}[theorem]{Lemma}
\begin{document}
\title[On the 3-rank  
of the class group  of quadratic fields]{On the 3-rank  
of the class group  of quadratic fields}
 \author[S.-C. Chen]{Shi-Chao Chen}

 \address{ Department of Mathematics
 and Statistics Sciences, Henan University, Kaifeng, 475004, China}
 \email{schen@henu.edu.cn}

 \author[C.-C. Wu]{Chuan-Chuan Wu}
 \address{ Department of Mathematics
 and Statistics Sciences, Henan University, Kaifeng, 475004, China}
 \email{wcc@henu.edu.cn}

\subjclass[2020]{Primary 11R29; Secondary 11R11}

\keywords{Class numbers, Quadratic fields, Indivisibility}

\begin{abstract}
Let $n\ge1$,   $r\ge0$ and $s\ge0$ be integers satisfying $4+r+3 s\le3^{n+1}$. Given linear polynomials
$f_{i}(x)=m_{i} x+n_{i}$ for $1 \le i \le r+s$, where  the   coefficients $m_{i} , n_{i}$ are positive integers satisfying certain conditions, we prove that  there exist infinitely many fundamental discriminants $D>0$  such that the 3-rank of the class group of each quadratic fields
$\mathbb{Q}(\sqrt{f_1(D)}), \ldots, \mathbb{Q}(\sqrt{f_r(D)})$ and  $\mathbb{Q}(\sqrt{-f_{r+1}(D)}), \ldots, \mathbb{Q}(\sqrt{-f_{r+s}(D)})$ is  simultaneously less than $n$.
Moreover, for any positive integer $k$, there exist positive integers $a, d$ such that the 3-rank of the class group of   each quadratic fields
$\mathbb{Q}(\sqrt{a+g_1(d)}), \ldots,\mathbb{Q}(\sqrt{a+g_k(d)})$ is simultaneously less than $n$  for polynomials $g_1(x), g_2(x), \ldots, g_k(x)$  that take integer values at the integers and have no constant terms.
\end{abstract}

\maketitle

\section{Introduction}
Let \(K = \mathbb{Q}(\sqrt{D})\) be the quadratic field corresponding to a fundamental discriminant \(D\), and denote by \(\mathrm{Cl}_{D}\) its class group. For a prime \(p\), the \(p\)-rank of a finite abelian group \(G\) is defined as \(\mathrm{r}_{p}(G) = \dim_{\mathbb{F}_{p}}(G / G^{p})\); for brevity we write \(r_{p}(D) = \mathrm{r}_{p}(\mathrm{Cl}_{D})\). Class groups are among the most fundamental objects in algebraic number theory, yet many basic questions about their structure remain open. A central problem is to understand, for an odd prime \(p\) and a non-negative integer\(r\), how often the equality \(r_{p}(D) = r\) occurs as \(D\) runs over fundamental discriminants.

In 1984, Cohen and Lenstra \cite{cl} proposed a celebrated heuristic that predicts a precise answer. Their conjecture states that for \(D > 0\) the probability that \(r_{p}(D) = r\) equals
\[
p^{-r^{2}-r}\prod_{j=r+2}^{\infty}\bigl(1-p^{-j}\bigr)\prod_{j=1}^{r}\bigl(1-p^{-j}\bigr)^{-1},
\]
while for \(D < 0\) it is
\[
p^{-r^{2}}\prod_{j=r+1}^{\infty}\bigl(1-p^{-j}\bigr)\prod_{j=1}^{r}\bigl(1-p^{-j}\bigr)^{-1}.
\]
A concise exposition of the Cohen--Lenstra heuristic for $r_p(D)=0$ can be found in Chapter~5, Section~10 of \cite{co}. The conjecture is still open for every odd prime \(p\) and every non-negative integer\(r\).

Research on the special case of the \(3\)-rank began even before the heuristic was formulated. In 1971, Davenport and Heilbronn \cite{dh}  obtained a landmark asymptotic formula for the number of \(3\)-torsion elements in \(\mathrm{Cl}_{D}\).  Hartung \cite{ha} proved the existence of infinitely many imaginary quadratic fields with \(r_{3}(D) = 0\) in 1974.  

The theorem of Davenport and Heilbronn stimulated a rich line of further investigations. For example, by refining their method, Nakagawa and Horie \cite{nh}   extended Hartung’s result to discriminants subject to arbitrary congruence conditions. Later work \cite{bst, btt, tt} provided the secondary main term and sharpened the error term in the Davenport--Heilbronn asymptotic.

Building on the result of Nakagawa and Horie as a key tool, a considerable body of literature has since been devoted to the simultaneous vanishing of the \(3\)-rank for infinite families of quadratic fields. Byeon \cite{by} initiated this direction by showing that for any fixed square-free integer \(m\) there exist infinitely many fundamental discriminants \(D > 0\) such that \(r_{3}(D)=r_{3}(mD)=0\). Subsequently, Ito \cite{it} proved that for any triple of square-free positive integer\(m_{1}, m_{2}, m_{3}\) one can find infinitely many \(D > 0\) with positive lower density satisfying \(\gcd(m_{1}m_{2}m_{3}, D) = 1\) and \(r_{3}(m_{1}D) = r_{3}(m_{2}D) = r_{3}(m_{3}D) = 0\). In a different vein, Chattopadhyay and Saikia \cite{cs} showed that for any integer \(t\) there exists a set of fundamental discriminants \(D > 0\) of positive density for which \(r_{3}(D)=r_{3}(D + 4t)=0\).

We observe that existing work mainly addresses two separate types of quadratic field families: those of the form \(\mathbb{Q}(\sqrt{mD})\) examined by Byeon and Ito, and translated families of the form \(\mathbb{Q}(\sqrt{D + t})\) examined by Chattopadhyay and Saikia. The more general combined case \(\mathbb{Q}(\sqrt{mD + t})\), and more broadly families given by \(\mathbb{Q}(\sqrt{f(D)})\) for a polynomial \(f\), has received little attention. This paper is devoted to studying the \(3\)-rank problem for families of quadratic fields of the form \(\mathbb{Q}(\sqrt{f(D)})\), where \(f\) is a general polynomial. To state our results precisely, we first introduce the following definition.

A pair of positive integers $(A, B)$ is \emph{good} if the following conditions hold.
\begin{enumerate}
\item[(i)] For every odd prime $p$ dividing $\operatorname{gcd}(A, B)$, we have $p^{2} \mid B$ but $p^{2} \nmid A$;
\item[(ii)] When $2 \mid B$, either
\begin{itemize}
\item $A \equiv 1\pmod 4$ and $B \equiv 0\pmod 4$, or
\item $A \equiv 8,12\pmod {16}$ and $B \equiv 0\pmod {16}$.
\end{itemize}
\end{enumerate}

\begin{theorem}\label{th1}
Let $n\ge1$,   $r\ge0$ and $s\ge0$ be integers satisfying \[4+r+3 s\le3^{n+1}\text{ and }r+s\ge1.\] Suppose that we have $r+s$ linear polynomials
$$f_{i}(x)=m_{i} x+n_{i},1 \le i \le r+s,$$
where the coefficients $m_i, n_i $ are positive integers. If all pairs $\left( n_{i} , m_{i}\right)$ are good for $ 1\le i\le r+s$, then there exist infinitely many fundamental discriminants $D>0$  such that the following hold 
 \begin{enumerate}
 \item $r_3(-f_i(D))<n   \text{ for all }  1\le i\le s,   \mbox{if }  s\ge1, r=0 ;$
 \item $r_3(f_i(D))<n \text{ for all } 1\le i\le r,   \mbox{if }  r\ge1, s=0;$ 
 \item $r_3(f_i(D))<n \text{ and } r_3( -f_{r+j}(D)) <n   \text{ for any }  1\le i\le r, 1\le j\le s, \mbox{if } r\ge1, s\ge1.$
 \end{enumerate}

\end{theorem}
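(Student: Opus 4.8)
The plan is to restrict $D$ to a single arithmetic progression $D\equiv a\pmod N$ in which $D$ is, outside a negligible set, a positive fundamental discriminant, and then to show that within this progression the proportion of $D$ failing even one of the required rank inequalities stays strictly below $1$; its complement then has positive lower density, producing infinitely many admissible $D$. The analytic input is the Davenport--Heilbronn mean-value theorem in the Nakagawa--Horie form: as $E$ runs over fundamental discriminants of a fixed sign in an admissible arithmetic progression, the average of $|\mathrm{Cl}_E[3]|$ over $0<\pm E<T$ tends to $4/3$ when $E>0$ and to $2$ when $E<0$. Since $r_3(E)\ge n$ forces $|\mathrm{Cl}_E[3]|-1\ge 3^n-1$, dividing shows that the density of such $E$ with $r_3(E)\ge n$ is at most $\tfrac{1/3}{3^n-1}$ if $E>0$ and at most $\tfrac{1}{3^n-1}$ if $E<0$; these are the per-field ``failure probabilities'' I will union-bound.

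First I would fix the congruence data using the good-pair hypothesis. Condition (i) forces every odd prime $p\mid\gcd(n_i,m_i)$ to satisfy $p^2\mid m_i$ while $p^2\nmid n_i$, hence $p$ divides $f_i(D)$ exactly once for every integer $D$, so such primes always lie in the squarefree part of $f_i(D)$; condition (ii) controls the $2$-adic valuation. Together these let me choose a modulus $N$ --- a suitable power of $2$, times the odd primes occurring in the $\gcd(n_i,m_i)$, times (crucially) all primes below a bound to be fixed later --- and a residue $a$ with $\gcd(a,N)=1$ and $a$ compatible with $D$ being a positive fundamental discriminant, so that for $D\equiv a\pmod N$, writing $f_i(D)=u^2v$ with $v$ squarefree, the number $v$ (which is positive and is, up to the usual factor, the discriminant of $\mathbb Q(\sqrt{f_i(D)})$) runs over fundamental discriminants in an admissible residue class as $D$ runs through the progression with $u$ held fixed; likewise $-f_{r+j}(D)=u^2v$ with $v<0$ for $1\le j\le s$, giving imaginary fields. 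Checking that all $r+s$ requirements are simultaneously satisfiable by one choice of $a$ is a finite computation, and is exactly what the definition of ``good'' is tailored for.

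Next, for a fixed index --- say a real one --- I would estimate the number $B_i(X)$ of $0<D<X$ with $D\equiv a\pmod N$ and $r_3(\mathbb Q(\sqrt{f_i(D)}))\ge n$, the target being
\[
B_i(X)\ \le\ \Bigl(\tfrac{1/3}{3^n-1}+o(1)\Bigr)\,\mathcal A(X),\qquad \mathcal A(X):=\#\{\,0<D<X:\ D\equiv a\ (\mathrm{mod}\ N)\,\}.
\]
To this end I would split according to the square part $u^2$ of $f_i(D)$. For each fixed $u$ the valid $D$ lie in a sub-progression, on which $D\mapsto v=f_i(D)/u^2$ is an affine bijection onto the fundamental discriminants of a fixed sign in an admissible residue class $\mathcal P_u$, with $0<v<Y_u$ and $Y_u\asymp X/u^2$; Nakagawa--Horie then bounds the number of these with $r_3(v)\ge n$ by $\bigl(\tfrac{1/3}{3^n-1}+o(1)\bigr)\#\{v\in\mathcal P_u:0<v<Y_u\}$, uniformly for $u\le W$, while the contribution of $u>W$ is $\ll X/W+\sqrt X$. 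The crucial point is that, summing over $u$, the pairs $(u,v)$ run bijectively over the integers $f_i(D)$ for $0<D<X$, $D\equiv a\pmod N$ --- with $u^2$ the square part --- so $\sum_u\#\{v\in\mathcal P_u:0<v<Y_u\}=\mathcal A(X)+O(1)$; letting $W\to\infty$ yields the bound, and the $s$ imaginary fields are treated identically with $\tfrac1{3^n-1}$ in place of $\tfrac{1/3}{3^n-1}$. I expect this step --- pushing Nakagawa--Horie through the change of variables and the square-part sum, over admissible classes of unbounded modulus, so that the mean-value constant is recovered \emph{exactly} and not inflated by a stray factor --- to be the main obstacle; what remains is bookkeeping.

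Granting the per-index bound, a union bound over the $r+s$ fields shows that the number of $0<D<X$ with $D\equiv a\pmod N$ that fail some required inequality is at most $\bigl(\tfrac{r+3s}{3^{n+1}-3}+o(1)\bigr)\mathcal A(X)$. On the other hand, since $\gcd(a,N)=1$, the number of non-squarefree $D<X$ in the progression is $\bigl(1-\prod_{p\nmid N}(1-p^{-2})\bigr)\mathcal A(X)+o(\mathcal A(X))$, and this proportion tends to $0$ as $N$ is allowed more prime divisors. So I would choose $N$ divisible by all primes up to a bound large enough (depending only on $n$) that $1-\prod_{p\nmid N}(1-p^{-2})<\tfrac1{3^{n+1}-3}$, and then invoke $4+r+3s\le3^{n+1}$, i.e.\ $r+3s\le3^{n+1}-4$, whence $\tfrac{r+3s}{3^{n+1}-3}\le 1-\tfrac1{3^{n+1}-3}$. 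Combining, the number of $0<D<X$ with $D\equiv a\pmod N$, $D$ squarefree, and all required rank inequalities is $\gg\mathcal A(X)\asymp X/N$ for all large $X$; such $D$ are positive fundamental discriminants, so infinitely many exist. The three cases (1)--(3) are this one argument with the absent family contributing nothing; the hypothesis $r+s\ge1$ ensures some inequality is imposed, and the constant $4$ (rather than $3$) is exactly what leaves room --- after absorbing the squarefree loss and the $o(1)$ --- for the failure proportion to stay \emph{strictly} below $1$.
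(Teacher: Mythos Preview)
Your high-level strategy---Nakagawa--Horie to bound each per-index failure density by $(c(\lambda_i)-1)/(3^n-1)$, then a union bound controlled by $r+3s\le 3^{n+1}-4$---is exactly the paper's. The execution differs at precisely the step you flag as the main obstacle, and the paper's route is cleaner. Rather than decomposing $f_i(D)$ by its square part $u^2$ and summing over $u$---which obliges you to check, for every $u$, that the induced residue class $\mathcal P_u$ for $v=f_i(D)/u^2$ is \emph{good} in the Nakagawa--Horie sense and that the $o(1)$ is uniform in $u$ (neither of which you discharge)---the paper strips off only the fixed $2$-power $2^{u_i}\Vert(m_i+n_i)$, setting $\widetilde f_i(x)=f_i(x)/2^{u_i}$, and then restricts from the outset to
\[
T(X)=\Bigl\{0<D<X:\ D\equiv 1\!\!\pmod{2^{u+2}},\ \mu(D)\prod_{i}\mu(\widetilde f_i(D))\neq 0\Bigr\},
\]
the set where $D$ and all $\widetilde f_i(D)$ are simultaneously squarefree. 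Tsang's theorem on tuples of squarefree values of linear forms gives $|T(X)|\sim cX$ with $c>0$. On this set each $D\mapsto l_i\widetilde f_i(D)$ (with $l_i\in\{1,4\}$) is an affine bijection onto the fundamental discriminants in the \emph{single} progression $l_i\widetilde f_i(1)\pmod{4l_im_i}$, and a short lemma---the only place the good-pair hypothesis on $(n_i,m_i)$ is used---verifies that $(l_i\widetilde f_i(1),\,4l_im_i)$ is good. Nakagawa--Horie then applies once per index with no square-part sum and no uniformity issues.

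Two consequences of this difference are worth noting. First, the paper's modulus is just $B=2^{u+2}$; there is no need to load $N$ with extra small primes, because the squarefreeness of $D$ is already part of the definition of $T(X)$ and its positive density comes from Tsang, not from pushing the non-squarefree proportion below a threshold. Second, your reading of the constant $4$ as slack to absorb the squarefree loss is not how the paper uses it: there, $4+r+3s\le 3^{n+1}$ is simply the integer form of the strict inequality $r+3s<3^{n+1}-3$ needed so that the union-bounded failure density $\sum_i (c(\lambda_i)-1)/(3^n-1)=(r+3s)/(3^{n+1}-3)$ is strictly below $1$; no further margin is required. Your square-part route may be salvageable, but verifying goodness of each $\mathcal P_u$ is a genuine obligation, and the paper sidesteps it entirely via Tsang.
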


If we take $r=0, s=3^{n}-2$ and $f_{i}(x)=x+i, 1 \le i \le s$ or $r=3^{n+1}-4, s=0$ and $f_{i}(x)=x+i, 1 \le i \le r$ in Theorem \ref{th1}, then we obtain the following corollary.

\begin{corollary}
Let $n$ be a positive integer. Then we have
\begin{enumerate}
\item There exist infinitely many fundamental discriminants $D>0$ such that for all $1 \le i \le 3^{n}-2$,
\[
r_3(-D-i)<n.
\]
\item There exist infinitely many fundamental discriminants $D>0$ such that for all $1 \le i \le 3^{n+1}-4$,
\[
r_3(D+i)<n.
\]
\end{enumerate}
\end{corollary}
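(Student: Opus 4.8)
The plan is to obtain both statements as direct specializations of Theorem~\ref{th1}, so the only real work is to verify that the indicated parameter choices meet the hypotheses of that theorem. In both cases the linear polynomials are $f_i(x) = x + i$, so $m_i = 1$ and $n_i = i$, and the pair that must be checked for goodness is $(n_i, m_i) = (i, 1)$. First I would dispose of the \emph{good} condition once and for all. Since $B = m_i = 1$, we have $\gcd(n_i, m_i) = \gcd(i, 1) = 1$, so no odd prime divides $\gcd(n_i, m_i)$ and condition~(i) is vacuous; likewise $m_i = 1$ is odd, so $2 \nmid B$ and condition~(ii) imposes nothing. Hence every pair $(i, 1)$ is good, uniformly in $i$, and this is really the crux of why these particular shifted forms are admissible.

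For statement~(1) I would set $r = 0$ and $s = 3^{n} - 2$, with $f_i(x) = x + i$ for $1 \le i \le s$. The counting inequality then reads
\[
4 + r + 3s = 4 + 3(3^{n} - 2) = 3^{n+1} - 2 \le 3^{n+1},
\]
and $r + s = 3^{n} - 2 \ge 1$ for every $n \ge 1$ (with $s = 1$ when $n = 1$). Thus the hypotheses of Theorem~\ref{th1} hold and case~(1) of that theorem applies, producing infinitely many fundamental discriminants $D > 0$ with $r_3(-f_i(D)) = r_3(-(D+i)) = r_3(-D-i) < n$ for all $1 \le i \le 3^{n} - 2$, which is precisely statement~(1).

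For statement~(2) I would set $r = 3^{n+1} - 4$ and $s = 0$, again with $f_i(x) = x + i$ for $1 \le i \le r$. Here
\[
4 + r + 3s = 4 + (3^{n+1} - 4) = 3^{n+1} \le 3^{n+1},
\]
now with equality, and $r + s = 3^{n+1} - 4 \ge 5 \ge 1$ for every $n \ge 1$. So case~(2) of Theorem~\ref{th1} applies and yields infinitely many fundamental discriminants $D > 0$ with $r_3(f_i(D)) = r_3(D+i) < n$ for all $1 \le i \le 3^{n+1} - 4$, giving statement~(2).

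I expect no genuine obstacle: the entire mathematical content is carried by Theorem~\ref{th1}, and the verification reduces to the two elementary arithmetic checks above together with the (vacuous) goodness of the pairs $(i,1)$. The one point deserving attention is that in statement~(2) the constraint $4 + r + 3s \le 3^{n+1}$ is saturated, so $r = 3^{n+1} - 4$ is the largest number of simultaneous shifts the theorem permits; this is what fixes the bounds $3^{n}-2$ and $3^{n+1}-4$ appearing in the two parts.
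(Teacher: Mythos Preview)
Your proposal is correct and follows exactly the approach indicated in the paper, which simply records the parameter choices $r=0,\ s=3^{n}-2$ and $r=3^{n+1}-4,\ s=0$ with $f_i(x)=x+i$ and invokes Theorem~\ref{th1}. Your explicit verification that the pairs $(i,1)$ are good (both conditions being vacuous since $m_i=1$) and that the inequality $4+r+3s\le 3^{n+1}$ holds is just the routine checking the paper leaves to the reader.
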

A natural question is whether the linear polynomials in Theorem \ref{th1} can be replaced by polynomials of degree greater than one. For example, given positive integers $k$ and $n$, one may ask if there are infinitely many fundamental discriminants $D>0$  such that for all $1 \le i \le k$,
\[r_3(D^2+i)<n?\]
The method used to prove Theorem \ref{th1} is insufficient to address this question. However, by applying Bergelson and Leibman's theorem on polynomial extensions of van der Waerden's and Szemerédi's theorems \cite{bl}, we can establish the following result:
\begin{theorem}\label{density}
Let $k, n$ be two positive integers and $g_1(x), g_2(x), \ldots,g_k(x)$ be polynomials that take integer values at the integers and have no constant terms. Then there exist positive integers $a, d$ such that
\[r_3( a+g_i(d) )<n
\]
for all $1\le  i\le k$.
\end{theorem}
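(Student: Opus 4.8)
\emph{The plan} is to combine a positive-density statement about the $3$-rank of real quadratic fields with the polynomial Szemer\'edi theorem of Bergelson and Leibman~\cite{bl}. Since $n\ge 1$, it suffices to produce positive integers $a,d$ with $r_3(a+g_i(d))=0$ for all $1\le i\le k$. I would therefore first exhibit a set $\mathcal{A}\subseteq\mathbb{N}$ of positive density, every element of which is a positive fundamental discriminant $D$ with $r_3(D)=0$, and then locate a polynomial configuration $a+g_1(d),\dots,a+g_k(d)$ lying entirely inside $\mathcal{A}$.

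For the density input I would invoke the Davenport--Heilbronn theorem~\cite{dh}: writing $N(X)=\#\{0<D<X:\ D\text{ fundamental}\}$, one has
\[
\frac{1}{N(X)}\sum_{\substack{0<D<X\\ D\text{ fundamental}}}3^{r_3(D)}\;\longrightarrow\;\frac{4}{3}\qquad(X\to\infty).
\]
Since a discriminant with $r_3(D)\ge 1$ contributes at least $3$ to the sum while one with $r_3(D)=0$ contributes $1$, this yields
\[
\#\{0<D<X:\ D\text{ fundamental},\ r_3(D)\ge 1\}\;\le\;\tfrac12\Bigl(\sum_{\substack{0<D<X\\ D\text{ fundamental}}}3^{r_3(D)}-N(X)\Bigr)\;=\;\Bigl(\tfrac16+o(1)\Bigr)N(X),
\]
so at least $\bigl(\tfrac56+o(1)\bigr)N(X)$ of the positive fundamental discriminants below $X$ satisfy $r_3(D)=0$. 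As the positive fundamental discriminants themselves form a set of positive density in $\mathbb{N}$, the set
\[
\mathcal{A}=\{\,D\in\mathbb{N}:\ D\text{ is a fundamental discriminant and }r_3(D)=0\,\}
\]
has positive lower natural density, hence positive upper Banach density.

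Finally I would apply the Bergelson--Leibman theorem in the form: if $E\subseteq\mathbb{N}$ has positive upper Banach density and $p_1,\dots,p_k\in\mathbb{Q}[x]$ satisfy $p_i(\mathbb{Z})\subseteq\mathbb{Z}$ and $p_i(0)=0$, then there exist $a,d\in\mathbb{N}$ with $d\ge 1$ and $a+p_i(d)\in E$ for every $i$. The hypotheses imposed on $g_1,\dots,g_k$ — integer-valued at the integers, no constant term — are exactly $g_i(\mathbb{Z})\subseteq\mathbb{Z}$ and $g_i(0)=0$, so taking $E=\mathcal{A}$ and $p_i=g_i$ produces positive integers $a,d$ with $a+g_i(d)\in\mathcal{A}$ for all $1\le i\le k$; each $a+g_i(d)$ is then a fundamental discriminant with $r_3(a+g_i(d))=0<n$, which is the desired conclusion. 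I do not expect a genuine obstacle here: the content lies entirely in recognizing that Bergelson--Leibman applies. The only points deserving care are that the $3$-rank condition has been repackaged as membership in a \emph{single, fixed} positive-density set (which is where the Davenport--Heilbronn lower bound enters, and why one works with $r_3=0$ rather than the nominal bound $r_3<n$), and that the cited version of the polynomial Szemer\'edi theorem genuinely returns \emph{positive} $a$ and $d$ — which is immediate from the $\mathbb{N}$-version with common difference $d\ge 1$. One could equally replace $\mathcal{A}$ by the larger set of non-square $N$ with $r_3(\mathbb{Q}(\sqrt N))=0$, using $\mathbb{Q}(\sqrt N)=\mathbb{Q}(\sqrt{D_N})$ for $D_N$ the fundamental discriminant of $\mathbb{Q}(\sqrt N)$, but this is not needed.
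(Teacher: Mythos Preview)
Your proposal is correct and follows essentially the same route as the paper: establish that the positive fundamental discriminants with small $3$-rank form a set of positive lower density (via Davenport--Heilbronn), then feed this set into the Bergelson--Leibman polynomial Szemer\'edi theorem. The only cosmetic difference is that the paper quotes its Theorem~\ref{be} for general $n$, whereas you observe (correctly) that $n\ge 1$ lets you reduce to the case $r_3=0$, so the unrefined Davenport--Heilbronn theorem already suffices and the congruence-restricted Nakagawa--Horie version is not needed here.
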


\section{Proof of Theorem \ref{density}}

To prove Theorem \ref{density}, we begin with some preliminaries. It is well known that an integer $D$ is a fundamental discriminant  of a quadratic field if either $D$ is a square-free integer with $D \equiv 1 \pmod 4 $ or $D=4m$ for some square-free integer $m \equiv 2, 3 \pmod 4 $. Throughout we denote by $D$ a fundamental discriminant.

For any positive real number $X  $,  we denote by $S_{1}(X)$  the set of positive  fundamental discriminants $0<D<X$, and by  $S_{-1}(X)$ the set of negative  fundamental discriminants  $-X<D<0$. Let $m\ge 1$ and $N \ge1$ be integers. For $\lambda=\pm1$ we define
\[
S_\lambda(X, m, N):=\left\{D: D\in S_\lambda(X),   D \equiv m\pmod N\right\}.
\]

The following theorem is due to Nakagawa and Horie \cite{nh}, which is a refinement of   Davenport-Heilbronn theorem  \cite{dh}.
\begin{theorem}[Nakagawa and Horie]\label{nh}
Suppose that the pair $(m,N)\in\mathbb{N}^2$ is good. Then we have

    \[
    \sum_{D \in S_{\lambda}(X, m, N)} 3^{r_{3}(D)} \sim c(\lambda)  \sum_{D \in S_{\lambda}(X, m, N)}1,
    \]
    where $c(\lambda)=\frac{4}{3}$ if $\lambda=1$, and $2$ if $\lambda=-1$. Moreover,
    \[\sum_{D \in S_{\lambda}(X, m, N)}1\sim \frac{3X}{\pi^2 \varphi(N)}\prod_{p|N}\frac{q}{1+p},\]
where $\varphi(\cdot)$ is the Euler's totient function, $q=4$ if $p=2$, and $q=p$ otherwise.

\end{theorem}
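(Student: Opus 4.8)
The plan is to prove the two displayed asymptotics separately, since they rest on very different inputs. The count $\sum_{D\in S_\lambda(X,m,N)}1$ is the elementary half: a positive fundamental discriminant is either a squarefree integer $\equiv1\pmod4$ or four times a squarefree integer $\equiv2,3\pmod4$, so counting those in the progression $D\equiv m\pmod N$ reduces to counting squarefree integers in arithmetic progressions. I would run the standard Möbius device $\mu^2(n)=\sum_{d^2\mid n}\mu(d)$, isolating the primes dividing $N$. Here the good-pair hypothesis does exactly the expected work: condition (i) forces each odd $p\mid\gcd(m,N)$ to divide a putative $D$ only to the first power (since $p^2\mid N$ but $p^2\nmid m$ give $p\parallel D$), so the class $m\bmod N$ is not made incompatible with square-freeness and the main term survives, while condition (ii) pins down the behaviour at $2$. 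Assembling the resulting local densities at the $p\mid N$ produces the stated main term $\frac{3X}{\pi^2\varphi(N)}\prod_{p\mid N}\frac{q}{1+p}$.

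The substance is the first asymptotic, which I would obtain by the Davenport--Heilbronn method \cite{dh} carried out with congruence conditions. The first step is a class field theory reduction: since $3^{r_3(D)}=|\mathrm{Cl}_D[3]|$, counting the index-$3$ subgroups of $\mathrm{Cl}_D$ --- equivalently the unramified cyclic cubic extensions of $\mathbb{Q}(\sqrt D)$, equivalently the isomorphism classes of cubic fields of discriminant exactly $D$ --- gives
\[
3^{r_3(D)}=1+2\,N_3(D),\qquad N_3(D):=\#\{K\text{ cubic}:\, d_K=D\}.
\]
Summing over $D\in S_\lambda(X,m,N)$ turns the left-hand side into $\sum_D 1+2\sum_D N_3(D)$, so everything reduces to counting cubic fields whose discriminant is a fundamental discriminant in the prescribed progression and of the prescribed sign.

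For that count I would use the Delone--Faddeev/Levi parametrization: cubic rings correspond, discriminant-preservingly, to $\mathrm{GL}_2(\mathbb{Z})$-orbits of integral binary cubic forms, with rings of integers of fields cut out by congruence conditions at each prime. The number of relevant orbits with $0<\lambda\cdot\mathrm{disc}<X$ is then estimated by the geometry of numbers: one fixes a fundamental domain for $\mathrm{GL}_2(\mathbb{Z})$ acting on real binary cubic forms and counts integral forms of bounded discriminant, getting a main term of size $X$ whose constant factors as an archimedean volume times $p$-adic local densities. Imposing maximality and the congruence $D\equiv m\pmod N$ inserts further local factors; the point of the Nakagawa--Horie refinement is that for a good pair these combine with the factors in the count of $S_\lambda(X,m,N)$ so that the ratio is exactly $c(\lambda)$, \emph{independent} of $(m,N)$. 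Tracking the two archimedean densities (real versus complex cubic fields) then yields $c(1)=\tfrac43$ and $c(-1)=2$.

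The hard part will be the error analysis in the lattice-point count, which concentrates at the cusp of the fundamental domain, where forms degenerate toward reducible ones and the counting region is unbounded; bounding this tail with an admissible error is the core of Davenport--Heilbronn. Intertwined with it is the sieve needed to pass from all cubic rings to maximal ones: maximality is a condition at every prime, so one counts with the local condition imposed at all $p\le Y$ and must then bound, uniformly in $Y$, the contribution of rings non-maximal at some $p>Y$, which is what lets the infinitely many local densities assemble into a convergent Euler product. Finally, the congruence refinement requires checking that hypotheses (i) and (ii) keep the local conditions at the primes $p\mid N$ consistent and nondegenerate, so that the dependence on $(m,N)$ cancels in the ratio as claimed.
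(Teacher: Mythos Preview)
The paper does not supply a proof of this statement: Theorem~\ref{nh} is quoted as a result of Nakagawa and Horie \cite{nh} (a refinement of Davenport--Heilbronn \cite{dh}) and is used as a black box, with only a remark pointing to the secondary-term refinement of Taniguchi--Thorne \cite{tt}. So there is no ``paper's own proof'' to compare your proposal against.

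That said, your outline is a faithful sketch of how the Nakagawa--Horie theorem is actually proved in the literature: the elementary count of fundamental discriminants in a progression via $\mu^2(n)=\sum_{d^2\mid n}\mu(d)$, the class-field-theory identity $3^{r_3(D)}=1+2N_3(D)$ relating $3$-torsion to cubic fields of discriminant $D$, the Delone--Faddeev parametrization of cubic rings by $\mathrm{GL}_2(\mathbb{Z})$-orbits of integral binary cubics, the lattice-point count in a fundamental domain with the cusp/tail estimate, and the sieve to maximal orders. The role you assign to the ``good pair'' hypothesis is also correct: it guarantees that the congruence class $m\bmod N$ is compatible with being a fundamental discriminant and that the local factors at $p\mid N$ cancel in the ratio, leaving the sign-dependent constant $c(\lambda)$. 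For the purposes of this paper none of that needs to be reproduced; you may simply cite \cite{nh}.
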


\begin{remark}If   $(6m,N)=1$, then the second term and the error term in the Nakagawa-Horie theorem were given by Taniguchi and Thorne \cite[Theorem 1.6]{tt}, that is,
 \[\sum_{\substack{0\le   D\le X\\D\equiv m\pmod N }} 3^{r_{3}(D)} =\frac{4X}{\pi^2 N}\prod_{p|N}\frac{1}{1-p^{-2}}   +cX^\frac{5}{6}+O(X^{18/23}N^{20/23}),
\]
where $c$ is a constant depending on $N$. 
It seems that this theorem also holds for any good pair $(m,N)$.
\end{remark}

\begin{theorem}\label{be}
Suppose that the pair $(m,N)\in\mathbb{N}^2$ is good. Then
 for any $n \in \mathbb{N}$, we have
\begin{equation}\label{1}
    \liminf_{X \rightarrow \infty} \frac{\sharp\left\{D \in S_{\lambda}(X, m, N) \mid r_{3}(D) <n \right\}}{\sharp S_{\lambda}(X, m, N)} \ge\frac{3^n-c(\lambda) }{3^{n}-1},
\end{equation}
where $c(\lambda)=\frac{4}{3}$ if $\lambda=1$, and $2$ if $\lambda=-1$.
\end{theorem}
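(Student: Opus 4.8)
The plan is to deduce this from the Nakagawa--Horie asymptotic (Theorem \ref{nh}) by a standard averaging argument. Fix a good pair $(m,N)$, fix $n\in\mathbb N$, and set $T(X)=\sharp S_\lambda(X,m,N)$. Split the sum over $D\in S_\lambda(X,m,N)$ of $3^{r_3(D)}$ according to whether $r_3(D)<n$ or $r_3(D)\ge n$. For the first part we use only the trivial bound $3^{r_3(D)}\le 3^{n-1}$; for the second part we use $3^{r_3(D)}\ge 3^n$. Writing $B(X)=\sharp\{D\in S_\lambda(X,m,N):r_3(D)<n\}$, this gives
\[
\sum_{D\in S_\lambda(X,m,N)}3^{r_3(D)}\ \ge\ 3^n\bigl(T(X)-B(X)\bigr),
\]
and also, from the other side,
\[
\sum_{D\in S_\lambda(X,m,N)}3^{r_3(D)}\ \le\ 3^{n-1}B(X)+3^n\bigl(T(X)-B(X)\bigr)\cdot\tfrac{3^n}{3^n}\ ,
\]
so I would instead bound the left side below by combining $3^{r_3(D)}\ge 1$ on the good set and $3^{r_3(D)}\ge 3^n$ off it: $\sum 3^{r_3(D)}\ge B(X)+3^n(T(X)-B(X))=3^nT(X)-(3^n-1)B(X)$. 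Rearranging yields
\[
B(X)\ \ge\ \frac{3^nT(X)-\sum_{D\in S_\lambda(X,m,N)}3^{r_3(D)}}{3^n-1}.
\]

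Now divide by $T(X)$ and take $\liminf_{X\to\infty}$. By Theorem \ref{nh} the ratio $\bigl(\sum_{D\in S_\lambda(X,m,N)}3^{r_3(D)}\bigr)/T(X)$ converges to $c(\lambda)$ (this is exactly the content of the asymptotic, since $T(X)\to\infty$ by the second assertion of Theorem \ref{nh}). Hence
\[
\liminf_{X\to\infty}\frac{B(X)}{T(X)}\ \ge\ \frac{3^n-c(\lambda)}{3^n-1},
\]
which is the claimed inequality \eqref{1}. The only points requiring a word of care are that $T(X)\to\infty$, so that dividing by $T(X)$ and passing to the limit is legitimate, and that the $\liminf$ of a difference is bounded below by the difference of the limit and the $\limsup$; both are immediate here because the averaged sum actually converges. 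I do not expect any real obstacle: the entire argument is the elementary observation that if the average of $3^{r_3(D)}$ is asymptotically $c(\lambda)<3$, then a definite proportion of the $D$ must have $3^{r_3(D)}<3^n$, quantified by the two-point bound above.

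One caveat worth flagging in the write-up: the inequality is only nonvacuous when $c(\lambda)<3^n$, i.e.\ always for $\lambda=1$ (where $c=4/3$) and for $\lambda=-1$ with $n\ge1$ (where $c=2<3\le 3^n$), so the stated range $n\in\mathbb N$ is fine. If one wanted a cleaner statement one could also record the trivial lower bound that the density is nonnegative, but since $3^n\ge 3>c(\lambda)$ for all $n\ge1$ the bound in \eqref{1} is already positive and no separate case is needed.
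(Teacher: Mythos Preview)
Your argument is correct and is essentially identical to the paper's: bound $\sum 3^{r_3(D)}\ge B(X)+3^n(T(X)-B(X))$, rearrange, divide by $T(X)$, and apply Theorem~\ref{nh}. Just clean up the write-up by deleting the two false-start displayed inequalities before ``so I would instead\ldots'', since only the lower bound $\sum 3^{r_3(D)}\ge 3^nT(X)-(3^n-1)B(X)$ is actually used.
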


\begin{proof}
We observe that
\[
\begin{aligned}
 \sum_{D \in S_{\lambda}(X, m, N)} 3^{r_{3}(D)}
& =\sum_{\substack{D \in S_{\lambda}(X, m, N) \\
r_{3}(D)<n}} 3^{r_{3}(D)}+\sum_{\substack{D \in S_{\lambda}(X, m, N) \\
r_{3}(D) \geq n}} 3^{r_{3}(D)} \\
& \ge \sum_{\substack{D \in S_{\lambda}(X, m, N) \\
r_{3}(D)<n}} 1+3^n\sum_{\substack{D \in S_{\lambda}(X, m, N) \\
r_{3}(D) \geq n}} 1\\
&=\sum_{\substack{D \in S_{\lambda}(X, m, N) \\
r_{3}(D)<n}} 1+3^n\left(\sum_{\substack{D \in S_{\lambda}(X, m, N) }} 1-\sum_{\substack{D \in S_{\lambda}(X, m, N) \\
r_{3}(D) < n}} 1\right)\\
&=3^n\sum_{\substack{D \in S_{\lambda}(X, m, N) }} 1+(1-3^n)\sum_{\substack{D \in S_{\lambda}(X, m, N) \\
r_{3}(D) < n}} 1.
\end{aligned}
\]
Thus
\[ \sum_{\substack{D \in S_{\lambda}(X, m, N) \\
r_{3}(D) < n}} 1
\ge \frac{3^n}{3^n-1}\sum_{\substack{D \in S_{\lambda}(X, m, N) }} 1-\frac{1}{3^n-1}\sum_{D \in S_{\lambda}(X, m, N)} 3^{r_{3}(D)}.\]
Dividing $\sum_{\substack{D \in S_{\lambda}(X, m, N) }}1$ and applying Theorem \ref{nh}, we complete the proof.

\end{proof}
\begin{proof}[Proof of Theorem \ref{density}]
Now we applying a theorem of Bergelson and Leibman \cite[Theorem B]{bl} to prove Theorem \ref{density}.
 Bergelson-Leibman  theorem states that
if $g_1(x), g_2(x), \ldots,g_k(x)$ are polynomials with rational coefficients   taking integer values at the integers and have no constant terms, then for every subset $A$ of $\{1, 2, \ldots ,n\}$ has positive upper Banach density, that is, \[\lim_{N-M\rightarrow\infty}\sup_M\frac{ |A\cap\{M+1,\ldots,N\}|}{N-M}>0,\]  there exist positive integers $a, d$ such that $a+g_1(d), a+g_2(d), \ldots, a+g_k(d)$ all belong to $A$.  Since a set has a positive the lower limit implies that it has a positive upper Banach density, Theorem \ref{density} follows immediately from Theorem \ref{be}. In fact, a quantitative bounds of Bergelson-Leibman  theorem have been obtained recently in \cite{sw}, which shows that for a positive constant $c$, any set $A \subseteq\{1, 2, \ldots ,n\}$  of density at least $n/(\log n)^c$  contains a nontrivial polynomial progression of the form $a+g_1(d), a+g_2(d),\ldots,a+g_k(d)$.
 \end{proof}

\section{Proof of Theorem \ref{th1}}
In this section we will prove  Theorem \ref{th1}. We begin by introducing some notations.
For each $i$ with $1 \le i \le r+s$, let us denote  by $u_{i}$   the highest power of $2$ dividing $m_i+n_i$, that is, $2^{u_i}||m_{i}+n_{i}$.  Put \[u=\max _{1 \le i \le r+s}\left\{u_{i}\right\}\] and denote \[B=2^{u+2}.\] It follows that for any fundamental discriminant  $D \equiv 1\pmod B$,
we have
\[f_i(D)=m_iD+n_i\equiv m_i+n_i\pmod B,\]
which implies that
\begin{equation}\label{odd}\frac{f_i(D)}{2^{u_i}}\equiv1\pmod2.\end{equation}
For convenience we define
\[
\widetilde{f}_{i}(x):=\frac{f_{i}(x)}{2^{u_{i}}}=\frac{m_{i} x+n_{i}}{2^{u_{i}}},\qquad i=1, 2, \cdots, r+s.
\]
Therefore by (\ref{odd}) we see that $\widetilde{f}_{i}(D)\in \mathbb{Z}$ and is odd for $D\equiv1\pmod B$. Let us  define $l_i$ for $1\le i\le r+s$ as

\[
l_{i}:= \begin{cases}1, & \text { if } \widetilde{f}_{i}(1) \equiv 1 \pmod 4 , \\ 4, & \text { if } \widetilde{f}_{i}(1) \equiv 3  \pmod 4.\end{cases}
\]
Note that if  $\widetilde{f}_{i}(D)$ is square-free, then $l_{i} \widetilde{f}_{i}(D)$ is a fundamental discriminant.

\begin{lemma}\label{good}
For $1\le i \le r+s$, the pair $( l_{i} \widetilde{f}_{i}(1), 4l_{i} m_{i})$ is good.
\end{lemma}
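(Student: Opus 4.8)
The plan is to check the two defining conditions of a good pair directly for the pair $(A,B):=(l_i\widetilde f_i(1),\,4l_i m_i)$, using the hypothesis that $(n_i,m_i)$ is good. The structural observation that makes everything clean is that $l_i\in\{1,4\}$ is a power of $2$, so the analysis at odd primes and the analysis at $2$ decouple: for an odd prime $p$ one has $v_p(A)=v_p(\widetilde f_i(1))$ and $v_p(B)=v_p(m_i)$, and moreover $v_p(\widetilde f_i(1))=v_p(m_i+n_i)$ because $\widetilde f_i(1)=(m_i+n_i)/2^{u_i}$.

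First I would verify condition (i). An odd prime $p$ divides $\gcd(A,B)$ if and only if $p\mid m_i$ and $p\mid m_i+n_i$, i.e.\ if and only if $p\mid\gcd(m_i,n_i)$. For such a $p$, goodness of $(n_i,m_i)$ gives $p^{2}\mid m_i$ and $p^{2}\nmid n_i$; hence $p^{2}\mid m_i\mid B$, so $p^{2}\mid B$. Since $p\mid n_i$ but $p^{2}\nmid n_i$, we have $v_p(n_i)=1$ while $v_p(m_i)\ge 2$, so $v_p(m_i+n_i)=1$; therefore $v_p(A)=v_p(\widetilde f_i(1))=1$ and $p^{2}\nmid A$, as required.

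Next I would verify condition (ii). Since $4\mid B=4l_i m_i$, the hypothesis $2\mid B$ is automatically satisfied, so (ii) must hold unconditionally. If $l_i=1$, then by definition $\widetilde f_i(1)\equiv 1\pmod 4$, so $A\equiv 1\pmod 4$ and $B\equiv 0\pmod 4$, which is the first of the two alternatives in (ii). If $l_i=4$, then $\widetilde f_i(1)\equiv 3\pmod 4$, so $A=4\widetilde f_i(1)\equiv 12\pmod{16}$ and $B=16 m_i\equiv 0\pmod{16}$, which is the second alternative. Either way (ii) holds, so $(l_i\widetilde f_i(1),4l_i m_i)$ is good. The entire argument is bookkeeping with $p$-adic valuations; the only point requiring a little care is ensuring that the factors of $2$ introduced by $l_i$ and by the denominator $2^{u_i}$ do not interfere with the odd-prime condition, which is precisely why one records at the outset that $l_i$ is a power of $2$. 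I do not anticipate any genuine obstacle.
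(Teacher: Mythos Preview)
Your proof is correct and follows essentially the same route as the paper: you verify condition~(i) for odd primes by reducing $p\mid\gcd(A,B)$ to $p\mid\gcd(m_i,n_i)$ and invoking goodness of $(n_i,m_i)$, and then you verify condition~(ii) by the same case split on $l_i\in\{1,4\}$. Your use of $p$-adic valuations makes the step $p^{2}\nmid A$ slightly more explicit than in the paper, but the argument is identical in substance.
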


\begin{proof}
Let $p$ be an odd prime satisfying $p\mid\operatorname{gcd}(l_{i} \widetilde{f}_{i}(1), 4 l_{i} m_{i})$. Since $p\mid m_i$, $p\mid l_{i}\widetilde{f}_{i}(1)=\frac{l_i(m_i+n_i)}{2^{u_i}}$, we have $p\mid n_i$. Because $(n_i,m_i)$ is good, we find that $p^2|m_i$ and $p^2\nmid n_i$. This implies  that $p^2\mid 4l_im_i$ and $p^2\nmid l_{i}\widetilde{f}_{i}(1)$.

Now we verify that the pair $(l_{i} \widetilde{f}_{i}(1), 4 l_{i} m_{i})$ satisfies (ii) of the definition of ``good".  Since $\widetilde{f}_{i}(1)$ is odd, we have two cases to be discussed according to $\widetilde{f}_{i}(1) \equiv 1 \text{ or }3\pmod4$. If $\widetilde{f}_{i}(1) \equiv 1\pmod 4 $, then $l_{i}=1$ by definition. In this case, we have $l_{i} \widetilde{f}_{i}(1) \equiv 1\pmod 4$ and $4 l_{i} m_{i} \equiv 0\pmod 4$. If $\widetilde{f}_{i}(1) \equiv 3\pmod 4$, then $l_{i}=4$. Thus $l_{i} \widetilde{f}_{i}(1) \equiv 12\pmod{ 16} $ and $4 l_{i} m_{i} \equiv 0\pmod{ 16}$. Thus we conclude that the pair $(l_{i} \widetilde{f}_{i}(1), 4 l_{i} m_{i})$ is good.

\end{proof}

Let $\mu(n)$ denote  the usual Möbius function.
For $1 \le i \le r+s$, we define
\[
T_{i}(X):=\left\{0<D< X: D\equiv 1\pmod{2^{u_{i}+2}}, \mu(D)\mu\left(\widetilde{f}_{i}(D)\right) \neq 0\right\}
\]
and
\[
T(X):=\left\{0<D< X: D\equiv 1\pmod{ {B}},  \mu(D)\prod_{i=1}^{r+s}\mu\left(\widetilde{f}_{i}(D)\right) \neq 0\right\}.
\]

Now we are on a position to prove Theorem \ref{th1}.

\begin{proof}[Proof of Theorem \ref{th1}]
Let us denote
\[A_i=\left\{D \in T(X): \operatorname{rk}_{3}\left({\widetilde{f}_{i}(D)}\right) \geqslant n\right\}, i=1, 2, \ldots, r+s.\]
By  the inclusion-exclusion principle we have
\[
\begin{aligned}
&\sharp\left\{D \in T(X): \max _{1 \le i \le r+s} \operatorname{rk}_{3}\left({\widetilde{f}_{i}(D)}\right)<n\right\} \\
& =|\bigcap_{i=1}^{r+s}\overline{A_i}| \\
&\ge\sharp T(X) - \sum_{i=1}^{r+s}|A_i|.
\end{aligned}
\]
Since $T(X) \subseteq T_{i}(X)$,  it follows that
\[A_i\subseteq  \left\{D \in T_i(X): \operatorname{rk}_{3}\left({\widetilde{f}_{i}(D)}\right)\ge n\right\}.\]
Thus
\[
\sharp\left\{D \in T(X): \max _{1 \le i \le r+s } \operatorname{rk}_{3}\left(\mathrm{Cl}_{\widetilde{f}_{i}(D)}\right)<n\right\}\]\[\ge\sharp T(X) - \sum_{i=1}^{r+s}
\sharp\left\{D \in T_i(X): \operatorname{rk}_{3}\left({\widetilde{f}_{i}(D)}\right)\ge n\right\}.
\]
Dividing both sides by $\sharp T(X)$ gives
\begin{align}
&\quad \frac{\sharp\left\{D \in T(X): \max _{1 \le i \le r+s} \operatorname{rk}_{3}\left({\widetilde{f}_{i}(D)}\right)<n\right\}}{\sharp T(X)}\notag \\
& \geqslant 1-\sum_{i=1}^{r+s} \frac{\sharp\left\{D \in T_i(X): \operatorname{rk}_{3}\left({\widetilde{f}_{i}(D)}\right) \geqslant n\right\}}{\sharp T(X)}\notag\\
&= 1-\sum_{i=1}^{r+s} \left(1-\frac{\sharp\left\{D \in T_i(X): \operatorname{rk}_{3}\left({\widetilde{f}_{i}(D)}\right) <n\right\}}{\sharp T(X)}\right).
\label{bound}
\end{align}
We observe that each linear polynomial $l_{i} \widetilde{f}_{i}(x)$ gives  a bijection
\[
\begin{aligned}
l_{i} \widetilde{f}_{i}: T_{i}(X) & \longrightarrow   S_{\lambda_i}\left( l_{i}\widetilde{f}_{i}(X) , l_{i} \widetilde{f}_{i}(1), 4 l_{i} m_{i}\right), \\
D & \longmapsto  \lambda_i l_{i} \widetilde{f}_{i}(D),
\end{aligned}
\]
where
\[\lambda_i=\left\{
              \begin{array}{ll}
              1, &\text{ if }s=0 \text{ and } 1\le i\le r; \\
                -1, & \text{ if }r=0 \text{ and }   1\le i\le  s; \\
                1, &\text{ if }r\ge1,s\ge1 \text{ and }  1\le i\le r; \\
                -1, &\text{ if }r\ge1,s\ge1 \text{ and } r+1\le i\le r+s.
              \end{array}
            \right.
\]
It follows immediately that

\[
\sharp T_{i}(X)=\sharp S_{\lambda_i}\left( l_{i} \widetilde{f}_{i}(X),  l_{i} \widetilde{f}_{i}(1), 4 l_{i} m_{i}\right).
\]

Lemma \ref{good} tell us that the pair $( l_{i} \widetilde{f}_{i}(1), 4  l_{i} m_{i})$ is good. Since   $\mathrm{Cl}_{l_i\widetilde{f}_{i} (D)}=\mathrm{Cl}_{\widetilde{f}_{i}(D)}$, we apply Theorem \ref{be} for (\ref{bound}) and find that

\begin{align*}
&\quad \liminf_{X\rightarrow\infty} \frac{\sharp\left\{D \in T_{i}(X): \operatorname{rk}_{3}\left({\widetilde{f}_{i}(D)}\right)< n\right\}}{\sharp T(X)}\notag \\
& \ge\liminf_{X\rightarrow\infty} \frac{\sharp\left\{D \in T_{i}(X): \operatorname{rk}_{3}\left({\widetilde{f}_{i}(D)}\right)< n\right\}}{\sharp T_{i}(X)}\notag \\
&=  \liminf_{X\rightarrow\infty} \frac{\sharp\left\{D \in S_{\lambda_i}\left( l_{i}   \widetilde{f}_{i}(X),  l_{i} \widetilde{f}_{i}(1), 4  l_{i} m_{i}\right): \operatorname{rk}_{3}\left({D}\right)< n\right\}}{\sharp S_{\lambda_i}\left( l_{i}  \widetilde{f}_{i}(X),  l_{i} \widetilde{f}_{i}(1), 4  l_{i} m_{i}\right)}\notag\\
&\ge\frac{3^n-c(\lambda_i) }{3^n-1}.
\end{align*}
Inserting this inequality  into (\ref{bound}), we obtain

\begin{align}
&\liminf_{X \rightarrow \infty} \frac{\sharp\left\{D \in T(X): \max _{1 \le i \le r+s} \operatorname{rk}_{3}\left({\widetilde{f}_{i}(D)}\right)<n\right\}}{\sharp T(X)}\notag\\
&\geqslant  1-\sum_{i=1}^{r+s} \frac{c\left(\lambda_i\right)-1}{3^{n}-1}\notag\\
&=1-\frac{\frac{1}{3}r}{3^{n }-1}-\frac{s}{3^{n}-1}\notag\\
&\ge \frac{1}{3^{n+1}-3}. \label{bound1}
\end{align}

To finalize the proof of the theorem, it is sufficient to show that the cardinality of $T(X)$ has a positive density as $X$ tends to infinity.  For this purpose, we invoke a theorem established by  Tsang \cite{ts}.

\begin{theorem}[Tsang]\label{ts}
For positive integer $t$, let $f_{i}(x)$ for $1 \le i \le t$ be irreducible integral polynomials with degree not exceeding $l$, where $(l \geqslant 3)$. Let

\begin{equation}\label{omega}
\Omega(l):=\prod_{p}\left(1-\omega(p) p^{1-l}\right),
\end{equation}
where for each prime $p, \omega(p)$ denote the number of solutions for  $f_{i}(x)\equiv0\pmod{p^{l-1}}$. If $N(X)$ is the number of integers $n \le X$ such that the values $f_{i}(n)$ are all $(l-1)$-th power free, then we have

\[
N(X)=\Omega(l) X+O\left(\frac{X}{(\log X)^{ 1-\frac{2}{l+1}}}\right).
\]
The $O$-constant here depends on $t$ as well as on the coefficients of the given polynomials.
\end{theorem}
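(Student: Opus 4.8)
The plan is to prove Theorem~\ref{ts} by an Eratosthenes--Legendre sieve together with a separate treatment of large prime powers --- the classical route to power-free values of polynomials. Writing $t$ for the number of polynomials, the values $f_{1}(n),\dots,f_{t}(n)$ are all $(l-1)$-th power free exactly when for every prime $p$ there is no $i$ with $p^{l-1}\mid f_{i}(n)$. Expanding the indicator of this event by M\"obius inversion over square-free moduli yields
\[
N(X)=\sum_{d\text{ square-free}}\mu(d)\,\#\bigl\{n\le X:\ \forall\,p\mid d\ \exists\, i\text{ with }p^{l-1}\mid f_{i}(n)\bigr\}.
\]
By the Chinese Remainder Theorem the inner cardinality equals $\rho(d)\bigl(X/d^{\,l-1}+O(1)\bigr)$, where $\rho$ is multiplicative on square-free $d$ with $\rho(p)=\omega(p)$, the local count in \eqref{omega}. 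Truncating at $d\le D$ and extending the tail of the main term gives $X\sum_{d}\mu(d)\rho(d)d^{1-l}=\Omega(l)X$, since the associated Dirichlet series factors as $\prod_{p}\bigl(1-\omega(p)p^{1-l}\bigr)$.

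All the difficulty is in the error term, which I would control by the size of a prime $p$ with $p^{l-1}\mid f_{i}(n)$. The truncation error and the $O(1)$ terms for $d\le D$ contribute $\ll XD^{\,2-l+o(1)}+D^{\,1+o(1)}$, negligible for $D$ a suitable power of $\log X$ (here $l\ge3$ is used). For primes $p$ up to about $X$ one bounds $\#\{n\le X:p^{l-1}\mid f_{i}(n)\}$ by $\rho_{i}(p^{l-1})\bigl(X/p^{l-1}+1\bigr)$; since $f_{i}$ is irreducible, hence separable, $p$ divides the nonzero discriminant or the leading coefficient of $f_{i}$ for only finitely many $p$, so Hensel's lemma gives $\rho_{i}(p^{l-1})=\rho_{i}(p)\le\deg f_{i}\le l$ for all but boundedly many $p$, and this range contributes $\ll X/\log X$. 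In particular, if \emph{every} $f_{i}$ has degree at most $l-1$ then $|f_{i}(n)|\ll X^{\,l-1}$, so no larger prime can occur, and the proof ends here with an error $\ll X/\log X$, which is stronger than claimed.

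The real obstacle is the case where some $f_{i}$ has degree exactly $l$: then $f_{i}(n)$ can be as large as $\asymp X^{l}$, so $p$ may run up to $\asymp X^{\,l/(l-1)}$, and the bound $\rho_{i}(p^{l-1})\le l$ per prime, summed over the $\gg X$ primes in that range, is hopelessly weak. This is exactly the situation of Hooley's theorem on square-free values of cubic polynomials, and it needs genuine arithmetic input. I would write each offending $n$ as a solution of $f_{i}(n)=m\,p^{l-1}$ with $1\le m\ll X^{l}/p^{l-1}$, decompose dyadically in the size of $m$ so that $p$ ranges over a short interval, and on each block apply Hooley's method --- analysing the factorisation of $f_{i}(n)$ in the number field generated by a root of $f_{i}$, equivalently bounding integral points of bounded height on the superelliptic curves $f_{i}(x)=m\,y^{\,l-1}$, which have genus $\binom{l-1}{2}\ge1$ because $l\ge3$ and $f_{i}$ is separable. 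I expect this large-prime estimate to be the main difficulty, and a careful choice of the cut-offs in it should produce the exponent $1-\tfrac{2}{l+1}$. One then combines the ranges and sums over $i=1,\dots,t$, the implied constants depending on $t$ and on the coefficients of the $f_{i}$, as asserted.
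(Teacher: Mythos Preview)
The paper does not give its own proof of this statement: Theorem~\ref{ts} is quoted as a result of Tsang~\cite{ts} and invoked as a black box in the proof of Theorem~\ref{th1}, so there is no argument in the paper to compare your proposal against.

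As an outline your approach is the standard one and is sound where you carry it out. The M\"obius--CRT extraction of the main term $\Omega(l)X$ is correct, and you rightly isolate the only real obstruction: primes $p$ with $p^{l-1}$ exceeding $X^{l-1}$, which can occur only when some $f_i$ has degree exactly $l$. Your plan to treat that range by Hooley's method is the right idea, but as written it is a pointer rather than a proof --- you do not derive the specific exponent $1-\tfrac{2}{l+1}$, only ``expect'' it to emerge from a careful choice of cut-offs. That is the genuine gap in the proposal. For the paper's application this gap is irrelevant: the only use of Theorem~\ref{ts} is with $l=3$ and \emph{linear} polynomials $Bx+1,\ \widetilde{f}_i(Bx+1)$, which falls entirely within your easy case $\deg f_i\le l-1$, where your argument already gives an error $O(X/\log X)$ stronger than needed.
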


Note that
\[  T(X)= \left\{n \le \frac{X-1}{B}:  \mu(Bx+1)\prod_{i=1}^{r+s}\mu(\widetilde{f}_i(Bx+1)) \neq 0\right\}.
\]
Applying Theorem \ref{ts} with $l=3$ for irreducible polynomials $Bx+1, \widetilde{f}_i(Bx+1), i=1,2,\cdots,r+s$, we obtain the asymptotic formula

\begin{equation}\label{tx}
\sharp T(X)=\frac{\Omega(3)}{B} X+O\left(\frac{X}{\sqrt{\log X}}\right).
\end{equation}
We show that $\Omega(3)\neq 0$. To see this, let us first consider the case where
  $p=2$. We have $Bx+1\equiv1\pmod4$ and\[\widetilde{f}_i(Bx+1)\equiv\widetilde{f}_i(1)\pmod 4.\]
Since $\widetilde{f}_i(1)$ is odd for all $1\le i\le r+s$, we conclude that $w(2)=0$. Suppose that $p$ is an odd prime. We have three cases to solve the congruence equation
\[\widetilde{f}_i(Bx+1)=\frac{  m_{i} Bx+m_i+n_{i} }{4^{u_{i}}}\equiv0\pmod{p^2}.\]
Let $N_i$ be the number of solutions of the equation above. It is clear that $N_i=1$  if $p^2\nmid m_1$ or $p||m_i, p\mid n_i$. If $p||m_i, p\nmid n_i$, then  $N_i=0$. If $p^2|m_i^2, p\nmid n_i$ then $N_i=0$.  If $p^2|m_i^2, p\mid n_i$ then the condition $(m_i,n_i)$ is good implies that $p^2\nmid n_i$, so $N_i=0$. Thus we find that $w(p)\le 1$ for all $p\ge3$. Therefore we conclude that $\Omega(3)\neq 0$ and the cardinality of $ T(X)$ has a positive density.

Now combining  (\ref{bound1}) and (\ref{tx}), we find that
\[
\liminf _{X \rightarrow \infty} \frac{\sharp\left\{D \in T(X): \max _{1 \le i \le r+s} \operatorname{rk}_{3}\left({f_{i}(D)}\right)<n\right\}}{X} \geqslant \frac{\Omega(3)}{\left(3^{n+1}-3\right) B}>0.
\]
This implies that there are infinitely many fundamental discriminants $D$  satisfying $\mathrm{rk}_{3}\left({f_{i}(D)}\right)<n$ simultaneously for all $1 \le i \le r+s$. This completes the proof of Theorem \ref{th1}.
\end{proof}

\section*{Acknowledgments}

This work is supported by the Natural Science Foundation of China (11771121).

\end{document}